\newtheorem{thm}{Theorem}
\newtheorem{prop}{Proposition}
\newtheorem{coro}{Corollary}
\newtheorem{rem}{Remark} 
\newtheorem{defi}{Definition}
 \global\long\def\sbr#1{\left[ #1\right] }
 \global\long\def\cbr#1{\left\{  #1\right\}  }
 \global\long\def\rbr#1{\left(#1\right)}
 \global\long\def\P{\mathbb{P}}
 \global\long\def\R{\mathbb{R}}
 \global\long\def\F{\mathbb{F}}
 \global\long\def\dd#1{\textnormal{d}#1}
 \global\long\def\ab{[a,b]}
 \global\long\def\ra{\rightarrow}
 \global\long\def\TTV#1#2#3{{TV}^{#3}\!\rbr{#1,#2}}
 \global\long\def\UTV#1#2#3{{UTV}^{#3}\!\rbr{#1,#2}}
 \global\long\def\DTV#1#2#3{{DTV}^{#3}\!\rbr{#1,#2}}
 \global\long\def\ns{\infty}
 \global\long\def\f{:\left[a,b\right]\ra\R}
\begin{document}

\title{Quadratic variation of a c\`{a}dl\`{a}g semimartingale as a.s. limit of the normalized truncated variations}

\author{
\name{Rafa{\l{}} Marcin \L{}ochowski\thanks{CONTACT R.~M. \L{}ochowski. Email: rlocho@sgh.waw.pl}}
\affil{Department of Mathematics and Mathematical Economics, Warsaw School of Economics, ul. Madali\'{n}skiego 6/8, 02-513 Warsaw, Poland}
}

\maketitle

\begin{abstract}
For a real c\`{a}dl\`{a}g path $x$ we define sequence of semi-explicit quantities,
which \emph{do not depend on any partitions} and such that whenever
$x$ is a path of a c\`{a}dl\`{a}g semimartingale then these quantities tend
a.s. to the continuous part of the quadratic variation of the semimartingale. Next, we derive several consequences of this result and propose a new approach to define F\"{o}llmer's pathwise integral.
\end{abstract}
\smallskip

\begin{keywords} Quadratic variation, truncated variation,  
c\`{a}dl\`{a}g semimartingales, F\"{o}llmer's pathwise integral..
\end{keywords}

\begin{amscode}
{Mathematics Subject Classification (2010):} 60G48, 60H05
\end{amscode}

\section{Introduction}
\label{intro}
In recent years there is significant interest in the pathwise approach
to stochastic calculus. One of the most important quantities in stochastic
calculus is arguably the quadratic variation of a semimartingale.
It is usually defined as the limit of sums of squares of the increments
of a semimartingale along sequence of deterministic partitions, as
the meshes of the partitions tend to $0,$ and the convergence holds
in probability. Unfortunately, when we allow random partitions, it
may happen that this convergence and the limit (if it exists) depend
on the partitions chosen (see for example \cite[Theorem 7.1]{Obloj_local:2015}).
As a result one may obtain different values of F\"{o}llmer's pathwise
integral \cite{Foellmer:1981} with respect to a specific path, along
different sequences of partitions.

Fortunately, when the partitions are obtained from stopping times
such that the osscillations of a path on the consecutive (half-open
on the right) intervals of these partitions tend a.s. (almost surely)
to $0,$ then there is no ambiguity and the sums of squares of the
increments along these partitions tend a.s. (or a.s. along some subsequence
of these partitions) to the quadratic variation (see for example \cite[Proposition 2.4 and Proposition 2.3 ]{Obloj_local:2015}).
One of such partition schemes dates back at least to Bichteler, see
\cite[Theorem 7.14]{Bichteler:1981}, \cite{Karandikar:1995} and
as a result one obtains a sequence of \emph{pathwise} sums of squares
of the increments which tend a.s. to the quadratic variation. Some
modification of this scheme, so called Lebesgue partitions, was proposed
by Vovk in \cite{Vovk_cadlag:2015}, to prove that the quadratic variation
of typical, model-free c\`{a}dl\`{a}g price paths with mildly restricted jumps
exists (along the Lebesgue partitions). Later, the same scheme was
used in \cite{LochPerkPro:2018} to prove the existence of the quadratic variation
of typical, model-free, c\`{a}dl\`{a}g price paths, with mildly restricted jumps
directed downward. Typical price paths are (roughly speaking) those trajectories representing possible evolution of prices of some asset
which do not allow to obtain infinite wealth by risking small amount and trading this asset (for formal definition see \cite{LochPerkPro:2018}).

Let $\mathbb{D}$ denote the family of c\`{a}dl\`{a}g functions $x:\left[0,+\ns\right)\ra\R.$
In this article, for any $x\in\mathbb{D}$ we will define another sequence
of semi-explicit quantities, which \emph{do not depend on any partitions}
and such that whenever $X_{t},$ $t\ge0,$ is a real c\`{a}dl\`{a}g semimartingale
on a filtered probability space $\left(\Omega,\F,\P\right)$ and that
usual conditions hold (see \cite[Chapt. I, Sect. 1]{Protter:2004uq}),
then for $x=X\left(\omega\right),$ $\omega\in\Omega,$ these quantities
tend $\P$-a.s. to the continuous part of the quadratic variation
of $X.$ This result is a generalisation of \cite[Theorem 1]{LochowskiMilosSPA:2013}
to the case of c\`{a}dl\`{a}g semimartingales, however, the proof will be
completely different from the proof of \cite[Theorem 1]{LochowskiMilosSPA:2013}. 
The approach used in this article will 
be similar to the old approach of Wong and Zakai \cite{WongZakai:1965}, where the authors 
replace semimartingale integrator by finite total variation (and piecewise linear) approximations of the integrator. However, 
contrary to the Wong-Zakai approach, we will use adapted 
approximations.
We will mainly use results of \cite{Lochowski_stoch_integral:2014}
and properties of so called double Skorohod map.
Next,  we will derive several consequences of this result and propose a new approach to define F\"{o}llmer's pathwise integral

In the sequel, refering to a c\`{a}dl\`{a}g semimartingale $X_{t},$ $t\ge0,$
we will always assume that $X$ is a semimartingale on a filtered
probability space such that usual conditions hold.

\section{Main result}
\label{main}

To state our main result we need several definitions.

Let $-\ns<a<b<+\ns$ and $x\f$ be a real-valued path. The \emph{truncated
variation} of $x$ with the truncation parameter $\varepsilon\geq0$ is defined
as 
\[
\TTV x{\ab}\varepsilon:=\sup_{n}\sup_{a\leq t_{0}<\ldots<t_{n}\leq b}\sum_{i=1}^{n}\max\left\{ \left|x\left(t_{i}\right)-x\left(t_{i-1}\right)\right|-\varepsilon,0\right\} .
\]
Thus $\TTV x{\ab}\varepsilon$ is obtained by taking supremum of sums of truncated
increments $\max\left\{ \left|x\left(t_{i}\right)-x\left(t_{i-1}\right)\right|-\varepsilon,0\right\} $
over \emph{all} possible partitions $\pi=\left\{ a\leq t_{0}<\ldots<t_{n}\leq b\right\} $
of $\left[a,b\right].$ It is possible to prove that $\TTV x{\ab}{\varepsilon}<+\ns$
for any $\varepsilon>0$ iff $x$ is regulated, i.e. it has finite left limits
$x\left(t-\right)$ for $t\in\left(a,b\right]$ and finite right limits
$x\left(t+\right)$ for $t\in\left[a,b\right)$, see \cite[Fact 2.2]{LochowskiColloquium:2013}.

Together with the truncated variation we define two companion quantities
- \emph{upward and downward truncated variations,} which are defined
in the following way: 
\[
\UTV x{\ab}{\varepsilon}:=\sup_{n}\sup_{a\leq t_{0}<\ldots<t_{n}\leq b}\sum_{i=1}^{n}\max\left\{ x\left(t_{i}\right)-x\left(t_{i-1}\right)-\varepsilon,0\right\}
\]
and 
\[
\DTV x{\ab}{\varepsilon}:=\sup_{n}\sup_{a\leq t_{0}<\ldots<t_{n}\leq b}\sum_{i=1}^{n}\max\left\{ x\left(t_{i-1}\right)-x\left(t_{i}\right)-\varepsilon,0\right\} .
\]
\begin{rem}

The definitions of the (upward-, downward-) truncated variation may
seem ``pulled out of a hat'', however, these three quantities have
very natural interpretation: they are (attainable) lower bounds for
the total (resp. positive, negative) variation of any path approximating
$x$ with the accuracy $\varepsilon/2,$ see \cite[displays (2.1)-(2.3)]{LochowskiGhomrasni:2014}.

\end{rem}

Now we are ready to state our main result.

\begin{thm}\label{main} Let $X_{t},$ $t\geq0,$ be a real c\`{a}dl\`{a}g
semimartingale on a filtered probability space $\left(\Omega,\F,\P\right)$
such that usual conditions hold. For each $\varepsilon>0$ and  $ t\geq0$ let us define the
following c\`{a}dl\`{a}g processess 
\[
T_{t}^{\varepsilon}:=\varepsilon\cdot\TTV X{\left[0,t\right]}{\varepsilon},\quad U_{t}^{\varepsilon}:=\varepsilon\cdot\UTV X{\left[0,t\right]}{\varepsilon}\hbox{ and }D_{t}^{\varepsilon}:=\varepsilon\cdot\DTV X{\left[0,t\right]}{\varepsilon},
\]
then 
\[
\rbr{T^{\varepsilon},U^{\varepsilon},D^{\varepsilon}}\Rightarrow\rbr{\left[X\right]^{cont},\frac{1}{2}\left[X\right]^{cont},\frac{1}{2}\left[X\right]^{cont}}\quad\hbox{as }{\varepsilon}\ra0+,
\]
where ``$\Rightarrow$'' denotes $\P$-a.s. convergence in the uniform
convergence topology on compact subsets of positive half-line $\left[0,+\ns\right)$
and $\left[X\right]^{cont}$ denotes the continuous part of the quadratic
variation of $X.$

\end{thm}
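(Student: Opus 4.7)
The strategy is to approximate $X$ by an adapted, finite-variation process whose total variation coincides with the truncated variation of $X$. For each $\varepsilon>0$, applying the double Skorohod map to $X$ with reflecting barriers at distance $\varepsilon/2$ produces an adapted c\`adl\`ag process $\varphi^{\varepsilon}$ for which, by the properties developed in \cite{Lochowski_stoch_integral:2014}, one has $\|X-\varphi^{\varepsilon}\|_{\infty}\leq\varepsilon/2$, the total variation of $\varphi^{\varepsilon}$ on $[0,t]$ equals $\TTV{X}{[0,t]}{\varepsilon}$, and the positive and negative parts in its Jordan decomposition are precisely $\UTV{X}{[0,t]}{\varepsilon}$ and $\DTV{X}{[0,t]}{\varepsilon}$. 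This plays the role of the piecewise-linear smoothing in Wong--Zakai \cite{WongZakai:1965}, with the decisive advantage of being adapted.

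Setting $Y^{\varepsilon}:=X-\varphi^{\varepsilon}$, the next step is to apply It\^{o}'s formula to $(Y^{\varepsilon}_{t})^{2}$. Because $\varphi^{\varepsilon}$ has finite variation, the continuous part of the quadratic variation of $Y^{\varepsilon}$ equals $[X]^{cont}$. By the reflection property, the continuous part of $\varphi^{\varepsilon}$ can only increase on $\{Y^{\varepsilon}=\varepsilon/2\}$ and only decrease on $\{Y^{\varepsilon}=-\varepsilon/2\}$, so the continuous part of the Stieltjes integral $\int_{0}^{t}Y^{\varepsilon}_{s-}\,d\varphi^{\varepsilon}_{s}$ equals $(\varepsilon/2)$ times the continuous total variation of $\varphi^{\varepsilon}$. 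Collecting these contributions and using the algebraic identity $\varepsilon|\Delta\varphi^{\varepsilon}_{s}|-2Y^{\varepsilon}_{s-}\Delta\varphi^{\varepsilon}_{s}+(\Delta Y^{\varepsilon}_{s})^{2}=(\Delta X_{s})^{2}-(\Delta\varphi^{\varepsilon}_{s})^{2}$ at each jump (verified case-by-case from the Skorohod rule), one arrives at the key identity
\[
T_{t}^{\varepsilon}=[X]_{t}-[\varphi^{\varepsilon}]_{t}+(Y^{\varepsilon}_{0})^{2}-(Y^{\varepsilon}_{t})^{2}+2\int_{0}^{t}Y^{\varepsilon}_{s-}\,dX_{s}.
\]

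It then suffices to send the right-hand side to $[X]^{cont}$. The boundary term $(Y^{\varepsilon}_{0})^{2}-(Y^{\varepsilon}_{t})^{2}$ is $O(\varepsilon^{2})$ uniformly in $t$. Writing $X=X_{0}+M+A$, the Burkholder--Davis--Gundy inequality gives $\E[\sup_{t\leq T}|\int_{0}^{t}Y^{\varepsilon}_{s-}\,dM_{s}|^{2}]\lesssim\varepsilon^{2}\E[M]_{T}$, while the bounded-variation part is dominated by $(\varepsilon/2)\cdot \V{A}{[0,T]}{}$; both vanish. For the jump correction, $|\Delta\varphi^{\varepsilon}_{s}|\leq|\Delta X_{s}|$ at every jump (again from the Skorohod rule) and $\Delta\varphi^{\varepsilon}_{s}\to\Delta X_{s}$ as $\varepsilon\to 0+$ for every $s$ with $\Delta X_{s}\neq 0$; dominated convergence with summable dominant $(\Delta X_{s})^{2}$ yields $[\varphi^{\varepsilon}]_{t}\to[X]_{t}^{disc}$, whence $[X]_{t}-[\varphi^{\varepsilon}]_{t}\to[X]_{t}^{cont}$. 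Finally, the splitting between $U^{\varepsilon}$ and $D^{\varepsilon}$ follows from the Jordan decomposition, which gives $U_{t}^{\varepsilon}-D_{t}^{\varepsilon}=\varepsilon(\varphi^{\varepsilon}_{t}-\varphi^{\varepsilon}_{0})$ with $|\varphi^{\varepsilon}_{t}-\varphi^{\varepsilon}_{0}|\leq|X_{t}-X_{0}|+\varepsilon$; so $U^{\varepsilon}-D^{\varepsilon}\to 0$ uniformly on compacts, and combined with $U^{\varepsilon}+D^{\varepsilon}=T^{\varepsilon}$ this produces the claimed limits.

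I expect the main technical obstacle to lie in upgrading the $L^{2}$-bound on the stochastic-integral remainder to genuinely almost sure uniform convergence on compacts along the continuous parameter $\varepsilon\to 0+$. The natural route is to combine the BDG estimate above with Borel--Cantelli along the geometric subsequence $\varepsilon_{n}=2^{-n}$, obtain almost sure uniform convergence on $[0,T]$ along this subsequence, and then use the continuity of the remaining error terms in $\varepsilon$ (together with the monotonicity of $\varepsilon\mapsto\TTV{X}{[0,t]}{\varepsilon}$) to interpolate to the full limit. A secondary bookkeeping obstacle is the delicate jump-by-jump verification of the algebraic identity used in the second step, which requires distinguishing the three regimes (no reflection, upper reflection, lower reflection) of the double Skorohod map at a jump of $X$.
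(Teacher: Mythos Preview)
Your approach is correct in outline and genuinely different from the paper's. The paper starts from the \emph{pathwise} representation $\varepsilon\cdot\TTV{X^{\varepsilon}}{[0,t]}{}=\int_0^t(X-X^{\varepsilon})\,\dd{X^{\varepsilon}}$ (an integral against the finite-variation approximant), splits it as $\int_0^t X_{-}\,\dd{X^{\varepsilon}}-\int_0^t X^{\varepsilon}_{-}\,\dd{X^{\varepsilon}}+\text{jumps}$, and then invokes \cite[Theorem~3.2]{Lochowski_stoch_integral:2014} as a black box to obtain the a.s.\ convergence $\int_0^t X_{-}\,\dd{X^{\varepsilon(n)}}\Rightarrow\int_0^t X_{-}\,\dd X+[X]^{cont}_t$ along $\varepsilon(n)=1/(2n)$. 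You instead apply It\^o to $(Y^{\varepsilon})^2$ and arrive at a representation containing the \emph{stochastic} integral $\int_0^t Y^{\varepsilon}_{s-}\,\dd{X_s}$, which you then kill by BDG plus Borel--Cantelli. Your route is more self-contained (no appeal to a Wong--Zakai-type theorem) and makes the role of adaptedness explicit; the paper's route hides this work inside the cited theorem but avoids the localisation/subsequence bookkeeping. For the triplet $(U^{\varepsilon},D^{\varepsilon})$ both arguments are essentially the same Jordan-decomposition trick.

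Three points to tighten. First, the Skorohod-map output does not give \emph{exact} equality $\TTV{\varphi^{\varepsilon}}{[0,t]}{}=\TTV{X}{[0,t]}{\varepsilon}$; one only has $\TTV{X}{[0,t]}{\varepsilon}\le\TTV{\varphi^{\varepsilon}}{[0,t]}{}\le\TTV{X}{[0,t]}{\varepsilon}+\varepsilon$ (cf.\ \cite[Lemma~5.1]{Lochowski_stoch_integral:2014}), so your key identity holds for $\varepsilon\cdot\TTV{\varphi^{\varepsilon}}{[0,t]}{}$ and you should carry the harmless $O(\varepsilon^{2})$ discrepancy separately. Second, BDG requires $\E[M]_T<\infty$; localise first. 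Third, and more substantively, the geometric subsequence $\varepsilon_n=2^{-n}$ is too coarse for the monotonicity sandwich: for $\varepsilon\in[2^{-n-1},2^{-n}]$ the bounds $\tfrac12 T^{\varepsilon_n}_t\le T^{\varepsilon}_t\le 2T^{\varepsilon_{n+1}}_t$ only trap $T^{\varepsilon}_t$ between $\tfrac12[X]^{cont}_t$ and $2[X]^{cont}_t$. Use instead $\varepsilon_n=1/n$ (still square-summable, so Borel--Cantelli applies) so that the ratio $\varepsilon_n/\varepsilon_{n+1}\to1$ and the sandwich closes, exactly as the paper does in its final display of part (I).
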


\begin{proof} 

{\bf (I) Proof of the convergence of $T^{\varepsilon}.$}

For $t\geq0$ let $\Delta X_{t}=X_{t}-X_{t-},$ where
$X_{0-}:=0$ and for $t>0,$ $X_{t-}=\lim_{s\ra t-}X_{s},$ be the
jump at the moment $t.$ For each $\varepsilon>0$ let $X^{\varepsilon}$ be the process
constructed (for given process $X$) as in \cite[Section 2, display (2.1)]{Lochowski_stoch_integral:2014}. This construction is related to the double Skorohod map $\Gamma^{\varepsilon}$ on $[-{\varepsilon},{\varepsilon}],$ see \cite{Lochowski_stoch_integral:2014}, \cite{Burdzy:2009lr}, via $X^{\varepsilon} = X - \Gamma^{\varepsilon}\rbr{X-X_0}.$ $X^{\varepsilon}$ has the following properties. 
\begin{enumerate}[1.]
\item $X^{\varepsilon}$ has locally finite total variation; 
\item $X^{\varepsilon}$ has c\`{a}dl\`{a}g paths; 
\item for every $t\geq0,$ $\left|X_{t}-X_{t}^{\varepsilon}\right|\leq {\varepsilon};$ 
\item for every $t>0,$ $\left|\Delta X_{t}^{\varepsilon}\right|\leq\left|\Delta X_{t}\right|;$ 
\item the process $X^{\varepsilon}$ is adapted to the filtration $\F;$ 
\item $X_{0}^{\varepsilon}=X_{0}.$ 
\end{enumerate}
Moreover, by \cite[Lemma 5.1]{Lochowski_stoch_integral:2014} for
any $t\geq0$ we have 
\begin{equation}
\TTV X{\left[0,t\right]}{2{\varepsilon}}\leq\TTV{X^{\varepsilon}}{\left[0,t\right]}{}\leq\TTV X{\left[0,t\right]}{2{\varepsilon}}+2{\varepsilon},\label{eq:cTVlim1}
\end{equation}
where $\TTV{X^{\varepsilon}}{\left[0,t\right]}{}:=\TTV{X^{\varepsilon}}{\left[0,t\right]}0$
denotes the total variation of $X^{\varepsilon}$ on $\left[0,t\right].$

Recall the classical Jordan decomposition and notice that $\UTV{X^{\varepsilon}}{\left[0,t\right]}{}:=\UTV{X^{\varepsilon}}{\left[0,t\right]}0$
and $\DTV{X^{\varepsilon}}{\left[0,t\right]}{}:=\DTV{X^{\varepsilon}}{\left[0,t\right]}0$
are nothing else but positive and negative parts of the total variation
of $X^{\varepsilon}.$ By \cite[Lemma 5.2]{Lochowski_stoch_integral:2014} we
have that $\dd{\UTV{X^{\varepsilon}}{\left[0,t\right]}{}}$ and $\dd{\DTV{X^{\varepsilon}}{\left[0,t\right]}{}}$
are mutually singular measures carried by $\left\{ t>0:X_{t}-X_{t}^{\varepsilon}={\varepsilon}\right\} $
and $\left\{ t>0:X_{t}-X_{t}^{\varepsilon}=-{\varepsilon}\right\} ,$ and on these sets
we have 
\[
\dd{\UTV{X^{\varepsilon}}{\left[0,t\right]}{}}=\dd{X^{\varepsilon}}\quad\mbox{and}\quad\dd{\DTV{X^{\varepsilon}}{\left[0,t\right]}{}}=-\dd{X^{\varepsilon}}.
\]
From all this it follows that (see also \cite[display (5.2)]{Lochowski_stoch_integral:2014})
\begin{align}
{\varepsilon}\cdot\TTV{X^{\varepsilon}}{\left[0,t\right]}{} & ={\varepsilon}\int_{0}^{t}\dd{\TTV{X^{\varepsilon}}{\left[0,s\right]}{}}=\int_{0}^{t}\left(X-X^{\varepsilon}\right)\dd{X^{\varepsilon}}.\label{eq:TV_representation}
\end{align}

Representation
(\ref{eq:TV_representation}) together with the estimates (\ref{eq:cTVlim1})
will be the main ingredients of the proof. 

Setting $X_{0-}^{\varepsilon}:=0$
we calculate 
\begin{align}
{\varepsilon}\cdot\TTV{X^{\varepsilon}}{\left[0,t\right]}{} & =\int_{0}^{t}\left(X-X^{\varepsilon}\right)\dd{X^{\varepsilon}}=\int_{0}^{t}\left(X_{-}-X_{-}^{\varepsilon}+\Delta\left(X-X^{\varepsilon}\right)\right)\dd{X^{\varepsilon}}\nonumber \\
 & =\int_{0}^{t}X_{-}\dd{X^{\varepsilon}}-\int_{0}^{t}X_{-}^{\varepsilon}\dd{X^{\varepsilon}}+\sum_{0<s\leq t}\Delta\left(X_{s}-X_{s}^{\varepsilon}\right)\Delta X^{\varepsilon}.\label{eq:cTVlim2}
\end{align}

Let us now fix $T>0$ and for $n=1,2,\ldots,$ let us define ${\varepsilon}(n)=1/(2n).$
By \cite[Theorem 3.2]{Lochowski_stoch_integral:2014} we have 
\begin{equation}
\lim_{n\ra+\ns}\sup_{0\le t\le T}\left|\int_{0}^{t}X_{-}\dd{X^{{\varepsilon}(n)}}-\int_{0}^{t}X_{-}\dd X-\left[X\right]_{t}^{cont}\right|=0\quad\mbox{a.s.},\label{eq:Theorem3.2}
\end{equation}
where $\int_{0}^{t}X_{-}\dd{X^{{\varepsilon}(n)}}$ denotes the Lebesgue-Stieltjes
integral (recall that $X^{{\varepsilon}(n)}$ has finite total variation)
and $\int_{0}^{t}X_{-}\dd X$ denotes the (semimartingale) stochastic
integral. We may rewrite (\ref{eq:Theorem3.2}) in the following way:
\begin{align*}
\int_{0}^{t}X_{-}\dd{X^{{\varepsilon}(n)}} & \Rightarrow\int_{0}^{t}X_{-}\dd X+\left[X\right]_{t}^{cont}=\frac{1}{2}\left(X_{t}^{2}-X_{0}^{2}-\left[X\right]_{t}\right)+\left[X\right]_{t}^{cont}
\end{align*}
for $t\in\left[0,T\right],$ where $\left[X\right]_{t}=\left[X\right]_{t}^{cont}+\sum_{0<s\leq t}\left(\Delta X_{s}\right)^{2}$
denotes the quadratic variation of $X.$ Next, by the integration
by parts formula for the Lebesgue-Stieltjes integral for any ${\varepsilon}>0$
and $t\in\left[0,T\right]$ we calculate 
\[
\int_{0}^{t}X_{-}^{\varepsilon}\dd{X^{\varepsilon}}=\frac{1}{2}\left(\left(X_{t}^{\varepsilon}\right)^{2}-\left(X_{0}^{\varepsilon}\right)^{2}-\sum_{0<s\leq t}\left(\Delta X_{s}^{\varepsilon}\right)^{2}\right).
\]
Also, by properties 3. and 4. satisfied by $X^{\varepsilon}$ and by the dominated
convergence we have 
\[
\sup_{0\leq t\leq T}\sum_{0<s\leq t}\left|\left(\Delta X_{s}\right)^{2}-\left(\Delta X_{s}^{\varepsilon}\right)^{2}\right| = \sum_{0<s\leq T}\left|\left(\Delta X_{s}\right)^{2}-\left(\Delta X_{s}^{\varepsilon}\right)^{2}\right|\ra0
\]
and 
\begin{align*}
\sup_{0\leq t\leq T}\sum_{0<s\leq t}\left|\Delta\left(X_{s}-X_{s}^{\varepsilon}\right)\Delta X_{s}^{\varepsilon}\right| & \leq\sum_{0<s\leq T}\min\left\{ 2{\varepsilon}\left|\Delta X_{s}\right|,2\left|\Delta X_{s}\right|^{2}\right\} \ra0
\end{align*}
as ${\varepsilon}\ra0+,$ where ``$\ra$'' denotes $\P$-a.s. convergence. From
(\ref{eq:cTVlim2}) and last four relations we get 
\begin{align}
 & {\varepsilon}(n)\cdot\TTV{X^{{\varepsilon}(n)}}{\left[0,t\right]}{} \nonumber  \\
 & =\int_{0}^{t}X_{-}\dd{X^{{\varepsilon}(n)}}-\int_{0}^{t}X_{-}^{{\varepsilon}(n)}\dd{X^{{\varepsilon}(n)}}+\sum_{0<s\leq t}\Delta\left(X_{s}-X_{s}^{{\varepsilon}(n)}\right)\Delta X^{{\varepsilon}(n)} \nonumber \\
 & \Rightarrow\frac{1}{2}\left[X\right]_{t}^{cont}. \label{eq:cTVlim3}
\end{align}
Hence, from (\ref{eq:cTVlim1}) and (\ref{eq:cTVlim3})  we get for some $r(n) \in [-\rbr{2\varepsilon(n)}^2,0] $
\begin{eqnarray*}
 \frac{1}{n}\cdot\TTV X{\left[0,t\right]}{1/n} & = & 2{\varepsilon}(n)\cdot\TTV X{\left[0,t\right]}{2{\varepsilon}(n)} \\
& = & 2{\varepsilon}(n)\cdot\TTV{X^{{\varepsilon}(n)}}{\left[0,t\right]}{} + r(n) \Rightarrow\left[X\right]_{t}^{cont}.
\end{eqnarray*}

Finally,  the convergence ${\varepsilon}_{n}\cdot\TTV X{\left[0,t\right]}{{\varepsilon}_{n}}\Rightarrow\left[X\right]_{t}^{cont}$
for any sequence ${\varepsilon}_{n}\ra0+$ follows from the estimates 
\begin{align*}
& \frac{\left\lfloor 1/{\varepsilon}_{n}\right\rfloor }{\left\lfloor 1/{\varepsilon}_{n}\right\rfloor +1}\frac{1}{\left\lfloor 1/{\varepsilon}_{n}\right\rfloor }\cdot\TTV X{\left[0,t\right]}{1/\left\lfloor 1/{\varepsilon}_{n}\right\rfloor } \\ & \leq {\varepsilon}_{n}\cdot\TTV X{\left[0,t\right]}{{\varepsilon}_{n}}
 \leq\frac{\left\lceil 1/{\varepsilon}_{n}\right\rceil }{\left\lceil 1/{\varepsilon}_{n}\right\rceil -1}\frac{1}{\left\lceil 1/{\varepsilon}_{n}\right\rceil }\cdot\TTV X{\left[0,t\right]}{1/\left\lceil 1/{\varepsilon}_{n}\right\rceil }
\end{align*}
valid for ${\varepsilon}_{n}<1,$ which stem directly from inequalities 
\[
\frac{1}{\left\lfloor 1/{\varepsilon}_{n}\right\rfloor +1}\le {\varepsilon}_{n}\le\frac{1}{\left\lceil 1/{\varepsilon}_{n}\right\rceil -1}\mbox{ and }\frac{1}{\left\lceil 1/{\varepsilon}_{n}\right\rceil }\le {\varepsilon}_{n}\le\frac{1}{\left\lfloor 1/{\varepsilon}_{n}\right\rfloor }
\]
(valid for ${\varepsilon}_{n}<1$), and the fact that the function $\left(0,+\ns\right)\ni {\varepsilon}\mapsto\TTV X{\left[0,t\right]}{\varepsilon}$
is non-increasing.

{\bf (II) Proof of the convergence of the whole triplet $\rbr{T^{\varepsilon},U^{\varepsilon},D^{\varepsilon}}.$}

To prove the convergence of the whole triplet $\rbr{T^{\varepsilon},U^{\varepsilon},D^{\varepsilon}}$
for any ${\varepsilon}>0$ let us define the auxilary process 
\begin{equation}
\tilde{X}_{t}^{\varepsilon}:=X_{0}+\UTV X{\left[0,t\right]}{\varepsilon}-\DTV X{\left[0,t\right]}{\varepsilon}.\label{eq:Jordan1}
\end{equation}
The process $\tilde{X}^{\varepsilon}$ uniformly approximates $X$ with accuracy
$c.$ This is the consequence of \cite[Theorem 4]{LochowskiGhomrasniMMAS:2015}
and the classical Jordan decomposition. Indeed, let us fix some $\omega\in\Omega.$
By \cite[Theorem 4]{LochowskiGhomrasniMMAS:2015} for c\`{a}dl\`{a}g $x=X\left(\omega\right)$
there exists some picewise monotone $x^{\varepsilon}:\left[0,+\ns\right)\ra\R$
such that $x^{\varepsilon}$ approximates $x$ with accuracy ${\varepsilon}/2$ and 
\begin{equation}
\UTV x{\left[0,t\right]}{\varepsilon}=\UTV{x^{\varepsilon}}{\left[0,t\right]}{},\quad\DTV x{\left[0,t\right]}{\varepsilon}=\DTV{x^{\varepsilon}}{\left[0,t\right]}{}.\label{eq:Jordan}
\end{equation}
Thus, by the classical Jordan decomposition, 
\begin{align*}
x^{\varepsilon}\left(t\right) & =x^{\varepsilon}\left(0\right)+\UTV{x^{\varepsilon}}{\left[0,t\right]}{}-\DTV{x^{\varepsilon}}{\left[0,t\right]}{}\\
 & =x^{\varepsilon}\left(0\right)+\UTV x{\left[0,t\right]}{\varepsilon}-\DTV x{\left[0,t\right]}{\varepsilon}.
\end{align*}
Since $x^{\varepsilon}$ approximates $x=X\left(\omega\right)$ with accuracy
$c/2$ we must have that $\left|X_{0}\left(\omega\right)-x^{\varepsilon}\left(0\right)\right|\le {\varepsilon}/2.$
From this, the definition of $\tilde{X}_{t}^{\varepsilon}$ and the triangle
inequality we get 
\begin{align}
\left|\tilde{X}_{t}^{\varepsilon}\left(\omega\right)-X_{t}\left(\omega\right)\right| & \le\left|\tilde{X}_{t}^{\varepsilon}\left(\omega\right)-x^{\varepsilon}\left(t\right)\right|+\left|x^{\varepsilon}\left(t\right)-X_{t}\left(\omega\right)\right|\nonumber \\
 & =\left|X_{0}\left(\omega\right)-x^{\varepsilon}\left(0\right)\right|+\left|x^{\varepsilon}\left(t\right)-X_{t}\left(\omega\right)\right|\nonumber \\
 & \le {\varepsilon}/2+{\varepsilon}/2={\varepsilon}.\label{eq:unif_approx}
\end{align}
From \cite[Theorem 4]{LochowskiGhomrasniMMAS:2015} and (\ref{eq:Jordan})
we also have the relation 
\begin{align}
\TTV{X\left(\omega\right)}{\left[0,t\right]}{\varepsilon} & =\TTV{x^{\varepsilon}}{\left[0,t\right]}{}\nonumber \\
 & =\UTV{x^{\varepsilon}}{\left[0,t\right]}{}+\DTV{x^{\varepsilon}}{\left[0,t\right]}{}\nonumber \\
 & =\UTV{X\left(\omega\right)}{\left[0,t\right]}{\varepsilon}+\DTV{X\left(\omega\right)}{\left[0,t\right]}{\varepsilon}.\label{eq:Jordan2}
\end{align}
Finally, from (\ref{eq:unif_approx}) we get that $\tilde{X}_{t}^{\varepsilon}=X_{t}+R_{t}^{\varepsilon},$
where $\left|R_{t}^{\varepsilon}\right|\leq {\varepsilon}$ for $t\geq0$ and from (\ref{eq:Jordan1})
and (\ref{eq:Jordan2}) we have the following representation 
\[
\UTV X{\left[0,t\right]}{\varepsilon}=\frac{1}{2}\rbr{\TTV X{\left[0,t\right]}{\varepsilon}+X_{t}-X_{0}+R_{t}^{\varepsilon}},
\]
\[
\DTV X{\left[0,t\right]}{\varepsilon}=\frac{1}{2}\rbr{\TTV X{\left[0,t\right]}{\varepsilon}-X_{t}+X_{0}-R_{t}^{\varepsilon}}.
\]
From this representation we obtain the convergence of the whole triplet
$\rbr{T^{\varepsilon},U^{\varepsilon},D^{\varepsilon}}.$
\end{proof}

\section{Some consequences of Theorem \ref{main}}

From Theorem \ref{main} we also obtain pathwise formulas which in the limit tend a.s. to the quadratic covariation of two semimartingales. We have
 
\begin{coro}
Let $X_t$ and $Y_t,$ $t \ge 0,$ be two real c\`{a}dl\`{a}g semimartingales, then 
\[
{\varepsilon} \cdot \cbr{\TTV {X+Y}{\left[0,t\right]}{\varepsilon} -\TTV {X-Y}{\left[0,t\right]}{\varepsilon}} \Rightarrow 4 \left[X,Y\right]^{cont}_t, \hbox{ as } {\varepsilon} \ra 0+,
\]
 where $\left[X,Y\right]^{cont}$ denotes the continuous part of the quadratic covariation of $X$ and $Y,$ i.e. 
 $\left[X,Y\right]_t = \left[X,Y\right]^{cont}_t + \sum_{0<s\le t} \Delta X_s \Delta Y_s$ for $t \ge 0.$
\end{coro}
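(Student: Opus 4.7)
The plan is to derive the corollary directly from Theorem~\ref{main} via a polarization identity, applied to the continuous parts of the quadratic variations.

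First, observe that the sum $X+Y$ and the difference $X-Y$ of two c\`{a}dl\`{a}g semimartingales on $\rbr{\Omega,\F,\P}$ are themselves c\`{a}dl\`{a}g semimartingales on the same filtered probability space. Theorem~\ref{main} therefore applies to each of them separately, yielding
\[
{\varepsilon}\cdot\TTV{X+Y}{\left[0,t\right]}{\varepsilon}\Rightarrow\left[X+Y\right]_{t}^{cont}\quad\hbox{and}\quad{\varepsilon}\cdot\TTV{X-Y}{\left[0,t\right]}{\varepsilon}\Rightarrow\left[X-Y\right]_{t}^{cont}
\]
as ${\varepsilon}\ra0+$, with $\P$-a.s. convergence uniform on compact subsets of $\left[0,+\ns\right)$.

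Next, I would invoke the standard polarization identity for quadratic (co)variation of semimartingales: from $\left[X+Y\right]=\left[X\right]+2\left[X,Y\right]+\left[Y\right]$ and $\left[X-Y\right]=\left[X\right]-2\left[X,Y\right]+\left[Y\right]$ one obtains $\left[X+Y\right]-\left[X-Y\right]=4\left[X,Y\right]$. Taking continuous parts (using that the pure-jump part of $\left[\cdot,\cdot\right]$ is additive in the obvious bilinear way, so the decomposition $\left[X,Y\right]=\left[X,Y\right]^{cont}+\sum_{0<s\le\cdot}\Delta X_{s}\Delta Y_{s}$ is compatible with the decomposition of $\left[X\pm Y\right]$) gives
\[
\left[X+Y\right]_{t}^{cont}-\left[X-Y\right]_{t}^{cont}=4\left[X,Y\right]_{t}^{cont}.
\]

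Finally, I would subtract the two convergence statements from the first step; since a.s. uniform convergence on compacts is preserved under subtraction, the difference ${\varepsilon}\cdot\cbr{\TTV{X+Y}{\left[0,t\right]}{\varepsilon}-\TTV{X-Y}{\left[0,t\right]}{\varepsilon}}$ converges $\P$-a.s., uniformly on compacts, to $\left[X+Y\right]_{t}^{cont}-\left[X-Y\right]_{t}^{cont}=4\left[X,Y\right]_{t}^{cont}$, which is the claim. There is no real obstacle here beyond citing the already-proved Theorem~\ref{main}; the only point worth double-checking is the bookkeeping separating the continuous and pure-jump contributions in the polarization identity, which is standard.
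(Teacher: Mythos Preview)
Your proposal is correct and is precisely the argument the paper has in mind: the corollary is stated immediately after Theorem~\ref{main} as a direct consequence, with no separate proof given, since it follows from applying Theorem~\ref{main} to $X+Y$ and $X-Y$ and using polarization.
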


Also, using recent result \cite[Theorem 1]{LochowskiColloquium:2017}
and Theorem \ref{main} we obtain another pathwise formula for the
quadratic variation, where the numbers of interval crossings play a
role. To state this result we need to introduce the numbers of times
the graph of regulated $x:\left[a,b\right]\ra\R$ ``(down-, up-)
crosses'' (on $\left[a,b\right]$) the closed value interval $[y,y+{\varepsilon}].$
\begin{defi}\label{defd} Given a function $x:\left[a,b\right]\rightarrow\mathbb{R},$
for ${\varepsilon}\geq0$ we put $\sigma_{0}^{\varepsilon}=a$ and for $n=0,1,...$ 
\[
\tau_{n}^{\varepsilon}=\inf\left\{ t\ge\sigma_{n}^{\varepsilon}:t\leq b,x(t)>y+{\varepsilon}\right\} ,\mbox{ }\sigma_{n+1}^{\varepsilon}=\inf\left\{ t\ge\tau_{n}^{\varepsilon}:t\leq b,x(t)<y\right\} .
\]
Next, we set 
\[
d_{\varepsilon}^{y}\left(x,\left[a,b\right]\right):=\max\left\{ n:\sigma_{n}^{\varepsilon}\leq b\right\} .
\]
\end{defi} Similarly we define. \begin{defi}\label{defu} Given
a function $x:\left[a,b\right]\rightarrow\mathbb{R},$ for $c\geq0$
we put ${\tilde{\sigma}}_{0}^{\varepsilon}=a$ and for $n=0,1,...$ 
\[
{\tilde{\tau}}_{n}^{\varepsilon}=\inf\left\{ t \ge {\tilde{\sigma}}_{n}^{\varepsilon}:t\leq b,x(t)<y\right\} ,\mbox{ }{\tilde{\sigma}}_{n+1}^{\varepsilon}=\inf\left\{ t\ge{\tilde{\tau}}_{n}^{\varepsilon}:t\leq b,x(t)> y+{\varepsilon}\right\} .
\]
Next, we set 
\begin{equation}
u_{\varepsilon}^{y}\left(x,\left[a,b\right]\right):=\max\left\{ n:{\tilde{\sigma}}_{n}^{\varepsilon}\leq b\right\} .\label{eq:u_def}
\end{equation}
\end{defi} In all definitions we apply the convention that $\inf\emptyset=+\ns.$

The number $d_{\varepsilon}^{y}\left(x,\left[a,b\right]\right)$ can be viewed
as the number of times the graph of $x$ ``downcrosses'' (on $\left[a,b\right]$)
the closed value interval $[y,y+{\varepsilon}],$ while the number $u_{\varepsilon}^{y}\left(x,\left[a,b\right]\right)$
can be viewed as the number of times the graph of $x$ ``upcrosses''
the value interval $[y,y+{\varepsilon}].$

At last, for $x$ and the interval $\left[a,b\right]$ as in two preceding
definitions, we define the number of times the graph of $x$ crosses
(on $\left[a,b\right]$) the value interval $[y,y+{\varepsilon}]$ as 
\[
n_{\varepsilon}^{y}\left(x,\left[a,b\right]\right):=d_{\varepsilon}^{y}\left(x,\left[a,b\right]\right)+u_{\varepsilon}^{y}\left(x,\left[a,b\right]\right).
\]
\cite[Theorem 1]{LochowskiColloquium:2017} states that 
\begin{equation}
\UTV x{\left[a,b\right]}{\varepsilon}=\int_{\R}u_{\varepsilon}^{y}\left(x,\left[a,b\right]\right)\dd y,\label{eq:estim1}
\end{equation}
\begin{equation}
\DTV x{\left[a,b\right]}{\varepsilon}=\int_{\R}d_{\varepsilon}^{y}\left(x,\left[a,b\right]\right)\dd y\label{eq:estimdtv}
\end{equation}
and 
\begin{equation}
\TTV x{\left[a,b\right]}{\varepsilon}=\int_{\R}n_{\varepsilon}^{y}\left(x,\left[a,b\right]\right)\dd y.\label{eq:estimtv}
\end{equation}
Using (\ref{eq:estim1})-(\ref{eq:estimtv}) and Theorem \ref{main}
we get 
\begin{coro} Let $X_t$, $t \ge 0,$ be a real c\`{a}dl\`{a}g semimartingale, then 
\[
\int_{\R}{\varepsilon}\cdot u_{\varepsilon}^{y}\left(X,\left[0,\cdot\right]\right)\dd y\Rightarrow\frac{1}{2}\left[X\right]^{cont},
\]
\[
\int_{\R}{\varepsilon}\cdot d_{\varepsilon}^{y}\left(X,\left[0,\cdot\right]\right)\dd y\Rightarrow\frac{1}{2}\left[X\right]^{cont},
\]
\[
\int_{\R}{\varepsilon}\cdot n_{\varepsilon}^{y}\left(X,\left[0,\cdot\right]\right)\dd y\Rightarrow\left[X\right]^{cont}
\]
as ${\varepsilon}\ra0+.$ 
\end{coro}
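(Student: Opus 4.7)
The plan is to observe that the corollary is essentially a direct substitution: the integrals of crossing numbers appearing on the left-hand sides are exactly $\varepsilon$ times the (upward, downward, total) truncated variations by the identities (\ref{eq:estim1})--(\ref{eq:estimtv}), so they coincide pathwise with the processes $U^{\varepsilon}$, $D^{\varepsilon}$, $T^{\varepsilon}$ defined in Theorem \ref{main}, and the conclusion then follows immediately.

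More precisely, fix $\omega\in\Omega$ and ${\varepsilon}>0$. For every $t\ge 0$, applying (\ref{eq:estim1}) to the path $x=X(\omega)$ on the interval $[0,t]$ and multiplying by ${\varepsilon}$ yields
\[
\int_{\R}{\varepsilon}\cdot u_{\varepsilon}^{y}\bigl(X(\omega),[0,t]\bigr)\,\dd y
= {\varepsilon}\cdot\UTV{X(\omega)}{[0,t]}{{\varepsilon}} = U_{t}^{{\varepsilon}}(\omega),
\]
and analogously, using (\ref{eq:estimdtv}) and (\ref{eq:estimtv}),
\[
\int_{\R}{\varepsilon}\cdot d_{\varepsilon}^{y}\bigl(X(\omega),[0,t]\bigr)\,\dd y = D_{t}^{{\varepsilon}}(\omega),\qquad
\int_{\R}{\varepsilon}\cdot n_{\varepsilon}^{y}\bigl(X(\omega),[0,t]\bigr)\,\dd y = T_{t}^{{\varepsilon}}(\omega).
\]
Hence the three $y$-integrals in the statement of the corollary are, as processes in $t$, literally equal to $U^{{\varepsilon}}$, $D^{{\varepsilon}}$ and $T^{{\varepsilon}}$. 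Theorem \ref{main} asserts that
\[
\bigl(T^{{\varepsilon}},U^{{\varepsilon}},D^{{\varepsilon}}\bigr)\Rightarrow
\Bigl([X]^{cont},\tfrac{1}{2}[X]^{cont},\tfrac{1}{2}[X]^{cont}\Bigr)
\quad\text{as }{\varepsilon}\to 0+,
\]
with $\Rightarrow$ denoting $\P$-a.s.\ uniform convergence on compact subsets of $[0,+\ns)$. Reading off the three coordinates gives exactly the three convergence statements to be proved.

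There is essentially no obstacle: the only point worth a brief comment is that the pathwise identities (\ref{eq:estim1})--(\ref{eq:estimtv}) from \cite[Theorem 1]{LochowskiColloquium:2017} are stated for any regulated function, and c\`{a}dl\`{a}g paths are regulated, so the identities apply to $X(\omega)$ on $[0,t]$ for every $\omega$ in the full-probability set on which the paths of $X$ are c\`{a}dl\`{a}g. No additional measurability issue arises, as the integrals may be rewritten as the corresponding truncated variations before passing to the limit.
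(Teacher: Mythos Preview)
Your proof is correct and follows exactly the paper's approach: the paper simply states that the corollary is obtained ``using (\ref{eq:estim1})--(\ref{eq:estimtv}) and Theorem \ref{main}'', and your argument spells out precisely this substitution.
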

Much stronger result of this type, namely that ${\varepsilon}\cdot n_{\varepsilon}^{y}\left(X,\left[0,\cdot\right]\right)$
tends a.s. to the local time of $X$ at all but countably many real
$y$s, was proven in \cite[Theorem 3.3]{Lemieux:1983}, but only for
semimartingales satisfying the condition $\sum_{0<s\le t}\left|\Delta X_{s}\right|<+\ns$
a.s.

Theorem \ref{main} and the construction used in its proof imply also
\begin{coro}
Any real c\`{a}dl\`{a}g semimartingale $X$ may be uniformly approximated with accuracy
$\varepsilon$ by finite variation and adapted (to the natural filtration of
$X$) processes, whose total variation is of order $O\rbr{{\varepsilon}^{-1}}$ as ${\varepsilon} \ra 0+.$ Moreover,
if $X$ is a pure-jump semimartingale, then it may be uniformly approximated
with accuracy ${\varepsilon}$ by finite variation, adapted processes whose total
variation is of order $o\rbr{{\varepsilon}^{-1}}$ as ${\varepsilon} \ra 0+.$
\end{coro}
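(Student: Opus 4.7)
The plan is to reuse, without any new construction, the approximating process $X^{{\varepsilon}}$ built in the proof of Theorem~\ref{main} via the double Skorohod map, and to read off both parts of the corollary directly from its already-established properties together with the convergence proved in (\ref{eq:cTVlim3}).

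First I would observe that $X^{{\varepsilon}}$ is already the required uniform approximant. By properties 1, 2, 5 and 6 listed in the proof of Theorem~\ref{main}, it is a c\`{a}dl\`{a}g adapted process of locally finite total variation with $X_{0}^{{\varepsilon}} = X_{0}$, and by property 3 it satisfies $|X_{t} - X_{t}^{{\varepsilon}}| \le {\varepsilon}$ for every $t \ge 0$. Since the formula $X^{{\varepsilon}} = X - \Gamma^{{\varepsilon}}\rbr{X - X_{0}}$ expresses $X^{{\varepsilon}}$ as a pathwise functional of $X$, the process is in fact adapted to the natural filtration of $X$, which is what the corollary requires.

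To quantify the total variation, I would fix an arbitrary $T > 0$ and invoke the convergence established in (\ref{eq:cTVlim3}) together with the monotonicity-along-subsequences argument at the end of Part~(I) of the proof of Theorem~\ref{main}: this yields, along any sequence ${\varepsilon}_{n} \ra 0+$, the $\P$-a.s.\ uniform-on-compacts convergence ${\varepsilon}_{n} \cdot \TTV{X^{{\varepsilon}_{n}}}{\left[0,T\right]}{} \ra \tfrac{1}{2}\left[X\right]^{cont}_{T}$. Since $\left[X\right]^{cont}_{T} < \ns$ almost surely, this immediately gives the pathwise bound $\TTV{X^{{\varepsilon}}}{\left[0,T\right]}{} = O\rbr{{\varepsilon}^{-1}}$ as ${\varepsilon} \ra 0+$, which is the first assertion. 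For the second, a pure-jump semimartingale has $\left[X\right]^{cont} \equiv 0$, so the same convergence forces ${\varepsilon} \cdot \TTV{X^{{\varepsilon}}}{\left[0,T\right]}{} \ra 0$ almost surely, that is, $\TTV{X^{{\varepsilon}}}{\left[0,T\right]}{} = o\rbr{{\varepsilon}^{-1}}$.

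I do not expect any serious obstacle: essentially everything the corollary asserts is either explicit in the properties of $X^{{\varepsilon}}$ from \cite{Lochowski_stoch_integral:2014} or has just been proved as part of Theorem~\ref{main}. The only piece of bookkeeping worth flagging is that the corollary phrases its claims as per-$\omega$ Landau bounds while (\ref{eq:cTVlim3}) is stated as almost-sure convergence; but since the limits $\tfrac{1}{2}\left[X\right]^{cont}_{T}$ are almost surely finite (and zero in the pure-jump case), the passage between the two formulations is immediate.
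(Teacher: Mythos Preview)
Your proposal is correct and matches the paper's own (implicit) argument: the paper gives no separate proof for this corollary but simply states that ``Theorem~\ref{main} and the construction used in its proof imply also'' the result, and you have spelled out precisely that implication using the properties of $X^{\varepsilon}$ and the convergence (\ref{eq:cTVlim3}). One tiny expository point: the monotonicity argument at the end of Part~(I) applies to $\TTV{X}{[0,t]}{\varepsilon}$ rather than to $\TTV{X^{\varepsilon}}{[0,t]}{}$, so to extend (\ref{eq:cTVlim3}) from the sequence $\varepsilon(n)=1/(2n)$ to arbitrary $\varepsilon\ra 0+$ you should pass through (\ref{eq:cTVlim1}) first --- but this is immediate and does not affect the validity of your argument.
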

\begin{rem}
If $X$ is a strictly $\alpha$-stable process, $\alpha\in\left(1,2\right),$
using scaling properties of $X$ it may be proven that $\TTV X{\left[0,T\right]}{\varepsilon}$
is of order ${\varepsilon}^{1-\alpha}$ as ${\varepsilon}\ra 0+$. 

However, there exist a pure-jump semimartingale $X$
for which $\TTV X{\left[0,T\right]}{\varepsilon}$
is of order greater than ${\varepsilon}^{-\beta}$ as ${\varepsilon}\ra 0+$ for any $\beta<1.$
An example of such a semimartingale is given in \cite[Proposition 3(a)]{Lepingle:1976}. That for this $X,$ $\TTV X{\left[0,T\right]}{\varepsilon}$
is of order greater than ${\varepsilon}^{-\beta}$ (as ${\varepsilon}\ra 0+$) for any $\beta<1$ follows from the fact that $X$ has a.s. infinite $2$-variation
norm and from \cite[Proposition 2]{LochowskiJIA:2018}.
\end{rem}

\section{F\"{o}llmer's pathwise integral}

Inspired by equation (2) of Section 1, in this section we will define
an integral with respect to a c\`{a}dl\`{a}g path $x:[0,+\infty)\ra\R$ which
may be uniformly approximated for any ${\varepsilon}>0$ with accuracy ${\varepsilon}$ by
some c\`{a}dl\`{a}g path $x^{\varepsilon}:[0,+\infty)\ra\R$ with finite total variation on compacts and such that
the \emph{uniform} (on compacts) limit 
\begin{equation}
\left\langle x\right\rangle _{t}:=2\lim_{{\varepsilon}\ra0+}\int_{0}^{t}\left(x_s-x^{\varepsilon}_s\right)\mathrm{d}x^{\varepsilon}_s\label{eq:q_var}
\end{equation}
exists for any $t\ge0.$ The integral in (\ref{eq:q_var}) is understood
as the classical Lebesgue-Stieltjes integral. Note that since $\sup_{t\ge 0} \left| x_t - x^{\varepsilon}_t\right| \le {\varepsilon}$, the function $t \mapsto \left\langle x\right\rangle _t^{\varepsilon}:=2\int_{0}^{t}\left(x-x^{\varepsilon}\right)\mathrm{d}x^{\varepsilon}$, $t \ge 0$, has  on the interval $[0,t]$ only jumps whose absolute values are no greater than $2{\varepsilon} \cdot \rbr{\sup_{0<s\le t}\left|\Delta x_s \right|+2{\varepsilon}}$ and thus the limit function
$t \mapsto \left\langle x\right\rangle _{t}$
is continuous. 

Let ${\cal X}=\left(x^{\varepsilon}\right)_{{\varepsilon}>0}$ be the family of functions
$x^{\varepsilon}.$ Now, for a measurabe function $f:\R\ra\R$ and $t>0$ we define 
\[
({\cal X})\int_{0+}^{t}f\left(x_{s-}\right)\mathrm{d}x_{s}:=\lim_{{\varepsilon}\ra0+}\int_{0+}^{t}f\left(x_{s-}\right)\mathrm{d}x_{s}^{\varepsilon}
\]
if this limit exists. We have the following result similar to \cite[TH\'EOR\`EME]{Foellmer:1981}
\begin{thm}\label{Foelmer}
Let $x:[0,+\infty)\ra\R$ be a c\`{a}dl\`{a}g path such that $\sum_{0<s\le t}\left(\Delta x_{s}\right)^{2}<+\infty$
for any $t > 0.$ Assume that the family ${\cal X}=\left(x^{\varepsilon}\right)_{{\varepsilon}>0}$ of c\`{a}dl\`{a}g paths $x^{\varepsilon}:[0,+\infty)\ra\R$
is such that $\left\Vert x-x^{\varepsilon}\right\Vert _{\infty}:=\sup_{s\ge0}\left|x_s-x^{\varepsilon}_s\right|\le {\varepsilon}$, $x^{\varepsilon}$ has finite total variation on compacts
and the uniform (on compacts) limit (\ref{eq:q_var}) exists. Moreover, assume that 
\begin{equation}
\sup_{{\varepsilon}>0}\int_{0}^{t}\left|x_s-x^{\varepsilon}_s\right|\left|\mathrm{d}x^{\varepsilon}_s\right|<+\infty\label{eq:boundedness}
\end{equation}
and there exists a constant $K$ such that 
\begin{equation}
\left|\Delta x_{t}^{\varepsilon}\right|\le K\left|\Delta x_{t}\right|\label{eq:est_jumps}
\end{equation}
for any $t >0.$ Then, for any $f:\R\ra\R$ of class $C^{1}$ and $t>0$ the integral $({\cal X})\int_{0+}^{t}f\left(x_{s-}\right)\mathrm{d}x_s$
exists, moreover, we have the following formula 
\begin{eqnarray*}
F\left(x_{t}\right)-F\left(x_{0}\right) & = & ({\cal X})\int_{0+}^{t}f\left(x_{s-}\right)\mathrm{d}x_{s}-\frac{1}{2}\int_{0+}^{t}f'\left(x_{s-}\right)\mathrm{d}\left\langle x\right\rangle _{s}\\
 &  & +\sum_{0<s\le t}\left\{ F\left(x_{s}\right)-F\left(x_{s-}\right)-f\left(x_{s-}\right)\Delta x_{s}\right\} ,
\end{eqnarray*}
where $F$ is an antiderivative of $f$ and $\int_{0+}^{t}f'\left(x_{s-}\right)\mathrm{d}\left\langle x\right\rangle _{s}$ is the usual Lebesgue-Stieltjes integral.
\end{thm}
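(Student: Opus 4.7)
The plan is to apply the classical Lebesgue--Stieltjes chain rule to each BV approximant $x^{\varepsilon}$ and then pass to the limit ${\varepsilon}\to 0+$. Since $x^{\varepsilon}$ is c\`{a}dl\`{a}g of finite variation on compacts and $F\in C^{1}$,
\[
F(x^{\varepsilon}_t) - F(x^{\varepsilon}_0) = \int_{0+}^t f(x^{\varepsilon}_{s-})\,\mathrm{d}x^{\varepsilon}_s + \sum_{0<s\le t} \bigl\{F(x^{\varepsilon}_s) - F(x^{\varepsilon}_{s-}) - f(x^{\varepsilon}_{s-})\Delta x^{\varepsilon}_s\bigr\}.
\]
The left-hand side tends to $F(x_t)-F(x_0)$ by continuity of $F$ and uniform convergence $x^{\varepsilon}\to x$. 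Inside the integral I write $f(x^{\varepsilon}_{s-})=f(x_{s-})+[f(x^{\varepsilon}_{s-})-f(x_{s-})]$; the part $\int_{0+}^t f(x_{s-})\mathrm{d}x^{\varepsilon}_s$ is exactly the quantity that, once every other term in the identity is shown to converge, will be forced to have the limit $({\cal X})\int_{0+}^t f(x_{s-})\mathrm{d}x_s$.

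For the Taylor remainder, the $C^{1}$ hypothesis provides
\[
f(x^{\varepsilon}_{s-}) - f(x_{s-}) = f'(x_{s-})(x^{\varepsilon}_{s-} - x_{s-}) + R^{\varepsilon}_s,\qquad |R^{\varepsilon}_s|\le \omega_{f'}({\varepsilon})\,|x^{\varepsilon}_{s-} - x_{s-}|,
\]
where $\omega_{f'}$ is the modulus of continuity of $f'$ on a compact containing the ranges of $x$ and $x^{\varepsilon}$ on $[0,t]$; then (\ref{eq:boundedness}) yields $\bigl|\int_{0+}^t R^{\varepsilon}_s\,\mathrm{d}x^{\varepsilon}_s\bigr|\le \omega_{f'}({\varepsilon})\cdot\sup_{{\varepsilon}>0}\int_0^t|x-x^{\varepsilon}||\mathrm{d}x^{\varepsilon}|\to 0$. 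The leading term can be written
\[
\int_{0+}^t f'(x_{s-})(x^{\varepsilon}_{s-}-x_{s-})\,\mathrm{d}x^{\varepsilon}_s = -\tfrac{1}{2}\int_{0+}^t f'(x_{s-})\,\mathrm{d}\langle x\rangle^{\varepsilon}_s + E^{\varepsilon}_t,
\]
where $E^{\varepsilon}_t$ records the jump-time difference between right-continuous and left-continuous integrands and is bounded in absolute value by $\|f'\|_\infty\sum_{0<s\le t}|\Delta(x-x^{\varepsilon})_s|\,|\Delta x^{\varepsilon}_s|$. Using $|\Delta(x-x^{\varepsilon})|\le 2{\varepsilon}$, (\ref{eq:est_jumps}), and $\sum_{s\le t}(\Delta x_s)^{2}<+\infty$, dominated convergence forces $E^{\varepsilon}_t\to 0$.

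The jump sum converges to $\sum_{0<s\le t}\{F(x_s)-F(x_{s-})-f(x_{s-})\Delta x_s\}$ by another dominated convergence argument: uniform convergence forces $x^{\varepsilon}_{s-}\to x_{s-}$ and hence $\Delta x^{\varepsilon}_s\to \Delta x_s$ pointwise, while the bound $|F(y)-F(x)-f(x)(y-x)|\le \tfrac{1}{2}\|f'\|_\infty(y-x)^{2}$ combined with (\ref{eq:est_jumps}) majorises each summand by $\tfrac{1}{2}K^{2}\|f'\|_\infty(\Delta x_s)^{2}$, which is summable by hypothesis. Assembling the four pieces and solving for $\int_{0+}^t f(x_{s-})\mathrm{d}x^{\varepsilon}_s$ simultaneously proves the existence of $({\cal X})\int_{0+}^t f(x_{s-})\mathrm{d}x_s$ and the F\"{o}llmer-type formula asserted in the theorem.

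The main obstacle is the Stieltjes convergence
\[
\int_{0+}^t f'(x_{s-})\,\mathrm{d}\langle x\rangle^{\varepsilon}_s \longrightarrow \int_{0+}^t f'(x_{s-})\,\mathrm{d}\langle x\rangle_s,
\]
since uniform convergence $\langle x\rangle^{\varepsilon}\to\langle x\rangle$ is by itself insufficient. What rescues the argument is that (\ref{eq:boundedness}) bounds the total variation of $\langle x\rangle^{\varepsilon}$ on $[0,t]$ uniformly in ${\varepsilon}$, while the observation recorded just before the theorem identifies the limit $\langle x\rangle$ as continuous. Since $s\mapsto f'(x_{s-})$ is bounded and continuous outside the countable (hence $\mathrm{d}\langle x\rangle$-null) set of jump times of $x$, a Helly-type theorem for signed measures with uniformly bounded total variation and continuous limit distribution delivers the required convergence.
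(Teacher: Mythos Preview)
Your proof is correct and follows essentially the same route as the paper's. Both arguments apply the Lebesgue--Stieltjes chain rule to the finite-variation approximant $x^{\varepsilon}$, split off the term $f(x^{\varepsilon})-f(x)$ via the mean value theorem/first-order Taylor expansion, use the uniform bound~(\ref{eq:boundedness}) together with the uniform continuity of $f'$ to discard the remainder, and handle all jump sums by dominated convergence using~(\ref{eq:est_jumps}) and $\sum_{s\le t}(\Delta x_s)^2<\infty$. The only stylistic differences are that the paper works with right-continuous integrands $f(x_s),f(x_s^{\varepsilon})$ while you use left limits, and that for the key convergence $\int f'(x_{\cdot})(x-x^{\varepsilon})\,\mathrm{d}x^{\varepsilon}\to\frac12\int f'(x_{\cdot-})\,\mathrm{d}\langle x\rangle$ the paper approximates the c\`adl\`ag integrand $f'(x_\cdot)$ uniformly by step functions (for which the convergence is immediate from~(\ref{eq:q_var})), whereas you phrase the same step as a Helly-type weak-convergence argument; since regulated functions are uniformly approximable by step functions and $\langle x\rangle$ is continuous, the two formulations are equivalent.
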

\begin{proof} For any ${\varepsilon}>0$ and $s \ge 0$ we write $f\left(x_{s}\right)-f\left(x_{s}^{\varepsilon}\right)=f'\left(\tilde{x}_{s}^{\varepsilon}\right)\left(x_{s}-x_{s}^{\varepsilon}\right)$
for some $\tilde{x}_{s}^{\varepsilon}\in\left[\min\left\{ x_{s},x_{s}^{\varepsilon}\right\} ,\max\left\{ x_{s},x_{s}^{\varepsilon}\right\} \right].$
Thus we have 
\begin{eqnarray}
\int_{0+}^{t}f\left(x_{s-}\right)\mathrm{d}x_{s}^{\varepsilon} & = & \int_{0+}^{t}f\left(x_{s}\right)-\Delta f\left(x_{s}\right)\mathrm{d}x_{s}^{\varepsilon} \label{eq:jedne} \\
 & = & \int_{0+}^{t}f'\left(\tilde{x}_{s}^{\varepsilon}\right)\left(x_{s}-x_{s}^{\varepsilon}\right)\mathrm{d}x_{s}^{\varepsilon}+\int_{0+}^{t}f\left(x_{s}^{\varepsilon}\right)\mathrm{d}x_{s}^{\varepsilon}-\sum_{0<s\le t}\Delta f\left(x_{s}\right)\Delta x_{s}^{\varepsilon}. \nonumber
\end{eqnarray}
Let us notice that $f'\left(\tilde{x}_{s}^{\varepsilon}\right)\left(x_{s}-x_{s}^{\varepsilon}\right)$ is measurable since it is equal $f\left(x_{s}\right)-f\left(x_{s}^{\varepsilon}\right)$. Since $\tilde{x}_{s}^{\varepsilon}\in\left[\min\left\{ x_{s},x_{s}^{\varepsilon}\right\} ,\max\left\{ x_{s},x_{s}^{\varepsilon}\right\} \right]$ we have the estimate 
\begin{align}
&\left| \int_{0+}^{t}f'\left(\tilde{x}_{s}^{\varepsilon}\right)\left(x_{s}-x_{s}^{\varepsilon}\right)\mathrm{d}x_{s}^{\varepsilon} - \int_{0+}^{t}f'\left({x}_{s}\right)\left(x_{s}-x_{s}^{\varepsilon}\right)\mathrm{d}x_{s}^{\varepsilon} \right| \nonumber \\
& \le \rbr{\sup_{y,z \in \sbr{A_t-{\varepsilon},B_t+{\varepsilon}}, |y-z| \le {\varepsilon}} \left| f'(y) - f'(z)\right| } \int_{0+}^{t}\left|x-x^{\varepsilon}\right|\left|\mathrm{d}x^{\varepsilon}\right|, \label{relation}
\end{align}
where $A_t = \inf_{s \in [0,t]} x_s$ and $B_t = \sup_{s \in [0,t]} x_s$. By the continuity of $f'$ and (\ref{eq:boundedness})
 the right side of (\ref{relation}) tends to $0$ as ${\varepsilon} \ra 0+$.
Again by the continuity of $f',$ we may replace $f'\left({x}_{s}\right)$ in $\int_{0}^{t}f'\left({x}_{s}\right)\left(x_{s}-x_{s}^{\varepsilon}\right)\mathrm{d}x_{s}^{\varepsilon}$ on the left side of (\ref{relation}) by a picewise constant function uniformly approximating $f'\left({x}_{s}\right)$ with arbitrary accuracy and using (\ref{relation}),  condition (\ref{eq:boundedness})
 and (\ref{eq:q_var}) we get 
\begin{equation}
\int_{0+}^{t}f'\left(\tilde{x}_{s}^{\varepsilon}\right)\left(x_{s}-x_{s}^{\varepsilon}\right)\mathrm{d}x_{s}^{\varepsilon}\rightarrow\frac{1}{2}\int_{0+}^{t}f'\left(x_{s}\right)\mathrm{d}\left\langle x\right\rangle _{s}=\frac{1}{2}\int_{0+}^{t}f'\left(x_{s-}\right)\mathrm{d}\left\langle x\right\rangle _{s}, \label{relation1}
\end{equation} 
as ${\varepsilon} \ra 0+$, where the last equality follows from the continuity of the function
$t \mapsto \left\langle x\right\rangle _{t}.$
Note also that due to (\ref{eq:boundedness}), $\left\langle x\right\rangle $
has finite total variation on compacts. Next, from the properties of the Lebesgue-Stieltjes
integral we obtain (recall that $x^{\varepsilon}$ is c\`adl\`ag)
\begin{equation}
\int_{0+}^{t}f\left(x_{s}^{\varepsilon}\right)\mathrm{d}x_{s}^{\varepsilon}=\int_{0}^{t}f\left(x_{s}^{\varepsilon}\right)\mathrm{d}x_{s}^{\varepsilon}=F\left(x_{t}^{\varepsilon}\right)-F\left(x_{0}^{\varepsilon}\right)-\sum_{0<s\le t}\left\{ \Delta F\left(x_{s}^{\varepsilon}\right)-f\left(x_{s}^{\varepsilon}\right)\Delta x_{s}^{\varepsilon}\right\} .\label{eq:trzecie}
\end{equation}
Using (\ref{eq:est_jumps}) and the assumption $\sum_{0<s\le t}\left(\Delta x_{s}\right)^{2}<+\infty$ we get by the dominated convergence that 
\begin{equation}
\sum_{0<s\le t}\left\{ \Delta F\left(x_{s}^{\varepsilon}\right)-f\left(x_{s}^{\varepsilon}\right)\Delta x_{s}^{\varepsilon}\right\} \ra \sum_{0<s\le t}\left\{ \Delta F\left(x_{s}\right)-f\left(x_{s}\right)\Delta x_{s}\right\} \label{eq:trzecie1}
\end{equation}
and 
\begin{equation}
\sum_{0<s\le t}\Delta f\left(x_{s}\right)\Delta x_{s}^{\varepsilon} \ra \sum_{0<s\le t}\Delta f\left(x_{s}\right)\Delta x_{s} \label{eq:trzecie2}
\end{equation}
as ${\varepsilon} \ra 0+$. Putting together relations (\ref{eq:jedne}), ({\ref{relation1}}), (\ref{eq:trzecie}), (\ref{eq:trzecie1}) and (\ref{eq:trzecie2}) we obtain the assertion.
\end{proof}

Theorem \ref{Foelmer} states a substitution formula for the  integral $\left({\cal X}\right)\int$
which does not coincide with the usual It\^o formula. Below, using the
family ${\cal X}$ we define another integral, denoted by $\left({\cal X}\right)'\int$,
which satisfies the usual It\^o formula. First, using integration by
parts, let us define for $t>0$ the integral 
\begin{align*}
\int_{0+}^{t}f\left(x_{s-}^{\varepsilon}\right)\dd{x_{s}}: & =f\left(x_{t}^{\varepsilon}\right)x_{t}-f\left(x_{0}^{\varepsilon}\right)x_{0}-\int_{0+}^{t}x_{s-}\dd{f\left(x_{s}^{\varepsilon}\right)}-\sum_{0<s\le t}\Delta x_{s}\Delta f\left(x_{s}^{\varepsilon}\right)
\end{align*}
where $\int_{0+}^{t}x_{s}\dd{f\left(x_{s}^{\varepsilon}\right)}$
is the usual Lebesgue-Stieltjes integral ($f\left(x^{\varepsilon}\right)$
has finite total variation on compacts). Now we define 
\[
\left({\cal X}\right)'\int_{0+}^{t}f\left(x_{s-}\right)\dd{x_{s}}:=\lim_{\varepsilon\ra0+}\int_{0+}^{t}f\left(x_{s-}^{\varepsilon}\right)\dd{x_{s}}
\]
if the limit on the right side exists. 
\begin{prop} \label{Foelmer1}
Assume that $x$ and $\cal{X}$ satisfy the assumptions of Theorem \ref{Foelmer}.
Then for any $f:\R\ra\R$ of class $C^{1}$ and $t>0$ the integral $\left({\cal X}\right)'\int_{0+}^{t}f\left(x_{s-}\right)\dd{x_{s}}$ exists and the usual It\^o
formula holds:
\begin{eqnarray*}
F\left(x_{t}\right)-F\left(x_{0}\right) & =& \left({\cal X}\right)'\int_{0+}^{t}f\left(x_{s-}\right)\dd{x_{s}}+\frac{1}{2}\int_{0+}^{t}f'\left(x_{s-}\right)\dd{\left\langle x\right\rangle _{s}} \\
 & &+\sum_{0<s\le t}\left\{ F\left(x_{s}\right)-F\left(x_{s-}\right)-f\left(x_{s-}\right)\Delta x_{s}\right\} ,
\end{eqnarray*}
where $F$ is an antiderivative of $f.$ 
\end{prop}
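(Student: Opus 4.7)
The plan is to express $J^\varepsilon := \int_{0+}^{t}f(x_{s-}^\varepsilon)\dd{x_s}$ (understood via the integration-by-parts formula stated just before Proposition~\ref{Foelmer1}) in a form whose $\varepsilon\to 0+$ limit can be read off term by term, and then rearrange to obtain the It\^o formula. Writing $e^\varepsilon_s := x_s - x_s^\varepsilon$, so that $\sup_s|e^\varepsilon_s|\le\varepsilon$, I would combine the definition of $J^\varepsilon$, the classical integration by parts formula applied to the two finite-variation c\`adl\`ag paths $x^\varepsilon$ and $f(x^\varepsilon)$, and the Stieltjes chain rule
\[
\int_{0+}^{t}f(x_{s-}^\varepsilon)\dd{x_s^\varepsilon} = F(x_t^\varepsilon)-F(x_0^\varepsilon) - \sum_{0<s\le t}\bigl\{\Delta F(x_s^\varepsilon) - f(x_{s-}^\varepsilon)\Delta x_s^\varepsilon\bigr\}.
\]
A short bookkeeping then yields the key identity
\begin{align*}
J^\varepsilon &= F(x_t^\varepsilon)-F(x_0^\varepsilon) - \sum_{0<s\le t}\bigl\{\Delta F(x_s^\varepsilon) - f(x_{s-}^\varepsilon)\Delta x_s^\varepsilon\bigr\} \\
&\quad + f(x_t^\varepsilon)e_t^\varepsilon - f(x_0^\varepsilon)e_0^\varepsilon - \int_{0+}^{t}e_{s-}^\varepsilon\dd{f(x_s^\varepsilon)} - \sum_{0<s\le t}\Delta e_s^\varepsilon\,\Delta f(x_s^\varepsilon).
\end{align*}

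Next I would pass to the limit $\varepsilon\to 0+$ in each piece. The boundary products $f(x_t^\varepsilon)e_t^\varepsilon$ and $f(x_0^\varepsilon)e_0^\varepsilon$ vanish because $|e^\varepsilon|\le\varepsilon$ and $f$ is locally bounded, while $F(x_t^\varepsilon)-F(x_0^\varepsilon)\to F(x_t)-F(x_0)$ by continuity. The sum $\sum\bigl\{\Delta F(x_s^\varepsilon) - f(x_{s-}^\varepsilon)\Delta x_s^\varepsilon\bigr\}$ converges to $\sum\bigl\{F(x_s)-F(x_{s-})-f(x_{s-})\Delta x_s\bigr\}$ by dominated convergence, using the Taylor-remainder bound $|\Delta F(x_s^\varepsilon)-f(x_{s-}^\varepsilon)\Delta x_s^\varepsilon|\le\tfrac12\|f'\|_\infty(\Delta x_s^\varepsilon)^2\le\tfrac{K^2}{2}\|f'\|_\infty(\Delta x_s)^2$ coming from (\ref{eq:est_jumps}); similarly $\sum\Delta e_s^\varepsilon\Delta f(x_s^\varepsilon)\to 0$ with summable dominant $(1+K)K\|f'\|_\infty(\Delta x_s)^2$ and pointwise limit zero (note that $\Delta x_s^\varepsilon\to\Delta x_s$ for every $s$ because $|x_s^\varepsilon-x_s|\le\varepsilon$ and $|x_{s-}^\varepsilon-x_{s-}|\le\varepsilon$). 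Matching this against the desired formula, everything reduces to the single claim
\[
\int_{0+}^{t}e_{s-}^\varepsilon\dd{f(x_s^\varepsilon)} \;\longrightarrow\; \tfrac12\int_{0+}^{t}f'(x_{s-})\dd{\langle x\rangle_s} \qquad \hbox{as }\varepsilon\to 0+,
\]
after which the identity rearranges precisely to the stated It\^o formula and shows that $({\cal X})'\int_{0+}^{t}f(x_{s-})\dd{x_s}=\lim_{\varepsilon\to 0+}J^\varepsilon$ exists.

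Establishing this last limit is the main obstacle, and I would attack it by splitting $\dd{f(x_s^\varepsilon)}=f'(x_{s-}^\varepsilon)\dd{x_s^\varepsilon}+\dd\mu_s^\varepsilon$, where $\mu^\varepsilon$ is the purely atomic measure carrying masses $\Delta f(x_s^\varepsilon)-f'(x_{s-}^\varepsilon)\Delta x_s^\varepsilon$ at the jumps of $x^\varepsilon$. For the absolutely continuous part $\int e_{s-}^\varepsilon f'(x_{s-}^\varepsilon)\dd{x_s^\varepsilon}$, I would first replace $f'(x_{s-}^\varepsilon)$ by $f'(x_{s-})$, the error being at most $\omega_{f'}(\varepsilon)\int|e_{s-}^\varepsilon|\,|\dd{x_s^\varepsilon}|\le\omega_{f'}(\varepsilon)\,C$ thanks to (\ref{eq:boundedness}), and then approximate the bounded c\`agl\`ad integrand $s\mapsto f'(x_{s-})$ uniformly by a step function on a partition of $[0,t]$, applying the defining limit (\ref{eq:q_var}) on each subinterval together with a summable $O((\Delta x_s)^2)$ jump correction to pass between the $x$- and $x_-$-versions of the integrand, in the spirit of (\ref{relation1}) in the proof of Theorem~\ref{Foelmer}. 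The atomic correction $\sum_{0<s\le t}e_{s-}^\varepsilon\bigl[\Delta f(x_s^\varepsilon)-f'(x_{s-}^\varepsilon)\Delta x_s^\varepsilon\bigr]$ is the trickiest point: under only the $C^1$ hypothesis on $f$, the natural bound $|\Delta f(x_s^\varepsilon)-f'(x_{s-}^\varepsilon)\Delta x_s^\varepsilon|\le|\Delta x_s^\varepsilon|\,\omega_{f'}(|\Delta x_s^\varepsilon|)$ does not yield a summable dominant uniformly in $\varepsilon$. I would handle it by splitting according to jump size: for $|\Delta x_s|>\eta$ only finitely many summands occur and each vanishes as $\varepsilon\to 0+$ because $|e_{s-}^\varepsilon|\le\varepsilon$; for $|\Delta x_s|\le\eta$, the bound $\omega_{f'}(|\Delta x_s^\varepsilon|)\le\omega_{f'}(K\eta)$ combined with $\sum_s|e_{s-}^\varepsilon||\Delta x_s^\varepsilon|\le\int|e_{s-}^\varepsilon|\,|\dd{x_s^\varepsilon}|\le C$ yields a uniform majorant $\omega_{f'}(K\eta)\,C$, which tends to $0$ as $\eta\downarrow 0$ by continuity of $f'$. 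Letting first $\varepsilon\to 0+$ and then $\eta\downarrow 0$ closes the argument.
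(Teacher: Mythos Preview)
Your proof is correct and follows the same route as the paper's: both arguments decompose $J^\varepsilon$ via integration by parts and the Stieltjes chain rule for $F(x^\varepsilon)$, and both reduce the matter to the single key convergence $\int_{0+}^t (x-x^\varepsilon)\,\dd{f(x^\varepsilon)}\to\tfrac12\int_{0+}^t f'(x_{s-})\,\dd{\langle x\rangle_s}$ (the paper's display (\ref{folm2})), which is handled by the method of Theorem~\ref{Foelmer}. Your bookkeeping differs only in using $e^\varepsilon_{s-}$ rather than $e^\varepsilon_s$ (which produces the extra sum $\sum\Delta e_s^\varepsilon\,\Delta f(x_s^\varepsilon)$, correctly sent to $0$) and in spelling out the atomic-correction and jump-splitting details that the paper subsumes under ``in a similar way as in the proof of Theorem~\ref{Foelmer}''.
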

\begin{proof} We have 
\begin{align}
 & \int_{0+}^{t}x_{s-}\dd{f\left(x_{s}^{\varepsilon}\right)}=\int_{0+}^{t}x_{s}\dd{f\left(x_{s}^{\varepsilon}\right)}-\sum_{0<s\le t}\Delta x_{s}\Delta f\left(x_{s}^{\varepsilon}\right) \nonumber \\
 & =\int_{0+}^{t}\left(x_{s}-x_{s}^{\varepsilon}\right)\dd{f\left(x_{s}^{\varepsilon}\right)}+\int_{0+}^{t}x_{s}^{\varepsilon}\dd{f\left(x_{s}^{\varepsilon}\right)} \nonumber \\
 & \quad-\sum_{0<s\le t}\Delta x_{s}\Delta f\left(x_{s}^{\varepsilon}\right). \label{folm1}
\end{align}
In a similar way as in the proof of Theorem \ref{Foelmer} we prove that 
\begin{equation}
\int_{0+}^{t}\left(x_{s}-x_{s}^{\varepsilon}\right)\dd{f\left(x_{s}^{\varepsilon}\right)}\ra\frac{1}{2}\int_{0+}^{t}f'\left(x_{s-}\right)\dd{\left\langle x\right\rangle _{s}} \label{folm2}
\end{equation}
as $\varepsilon\ra0+$. By integration by parts and properties of
the Lebesgue-Stieltjes integral, 
\begin{align*}
\int_{0+}^{t}x_{s}^{\varepsilon}\dd{f\left(x_{s}^{\varepsilon}\right)} & =x_{t}^{\varepsilon}f\left(x_{t}^{\varepsilon}\right)-x_{0}^{\varepsilon}f\left(x_{0}^{\varepsilon}\right)-\left\{ F\left(x_{t}^{\varepsilon}\right)-F\left(x_{0}^{\varepsilon}\right)\right\} \\
 & \quad+\sum_{0<s\le t}\left\{ F\left(x_{s}^{\varepsilon}\right)-F\left(x_{s-}^{\varepsilon}\right)-f\left(x_{s-}^{\varepsilon}\right)\Delta x_{s}^{\varepsilon}\right\} .
\end{align*}
For $s\in\left(0,t\right]$, using (\ref{eq:est_jumps}), we estimate 
\begin{align*}
\left|F\left(x_{s}^{\varepsilon}\right)-F\left(x_{s-}^{\varepsilon}\right)-f\left(x_{s-}^{\varepsilon}\right)\Delta x_{s}^{\varepsilon}\right| & =\frac{1}{2}\left|f'\left(\tilde{x}_{s}^{\varepsilon}\right)\right|\left(\Delta x_{s}^{\varepsilon}\right)^{2},\\
 & \le\frac{1}{2}\left(\sup_{y\in\left[A_{t}-\varepsilon,B_{t}+\varepsilon\right]}\left|f'\left(y\right)\right|\right)K\left(\Delta x_{s}\right)^{2}
\end{align*}
where $\tilde{x}_{s}^{\varepsilon}\in\left[\min\left\{ x_{s-}^{\varepsilon},x_{s}^{\varepsilon}\right\} ,\max\left\{ x_{s-}^{\varepsilon},x_{s}^{\varepsilon}\right\} \right]$,
$A_{t}=\inf_{s\in\sbr{0,t}}x_{s}$, $B_{t}=\sup_{s\in\sbr{0,t}}x_{s}$.
Thus, by the dominated convergence,
\begin{equation} \label{folm3}
\sum_{0<s\le t}\left\{ F\left(x_{s}^{\varepsilon}\right)-F\left(x_{s-}^{\varepsilon}\right)-f\left(x_{s-}^{\varepsilon}\right)\Delta x_{s}^{\varepsilon}\right\} \ra\sum_{0<s\le t}\left\{ F\left(x_{s}\right)-F\left(x_{s-}\right)-f\left(x_{s-}\right)\Delta x_{s}\right\} .
\end{equation}
 Putting together relations (\ref{folm1})-(\ref{folm3}) we obtain the assertion. 
\end{proof} 
\begin{rem} The integral $\left({\cal X}\right)'\int$ satisfies the
``usual'' It\^o formula as in F\"{o}llmer's famous paper \cite{Foellmer:1981}. Unfortunately, it is not clear for the author of this paper if it is possible to ``match'' both approaches. This means if it is possible for a given sequence of partitions such that F\"{o}llmer's measures converge weakly to some measure corresponding to a quadratic variation,  to construct a family of functions ${\cal X}=\left(x^{\varepsilon}\right)_{{\varepsilon}>0}$ such that $\left\langle x\right\rangle$ defined in  (\ref{eq:q_var}) exists and is equal the continuous part of this quadratic variation. Opposite possibility for broad class of continuous paths $x$ and non-decreasing $\left\langle x\right\rangle$ follows easily from \cite[Theorem 7.1]{Obloj_local:2015}. Some recent (unpublished) results of the author and his collaborators (Jan Ob\l\'oj, David Pr\"omel and Pietro Siorpaes) indicate that the existence of the quadratic variation defined as the limit of normalized sequence of the truncated variations is weaker than the existence of the quadratic variation obtained as the limit of the sums of squares of the increments along so called Lebesgue partitions.
\end{rem}
As a direct consequence of Theorem \ref{Foelmer} and Proposition \ref{Foelmer1} we obtain the following corollary.
\begin{coro} \label{coroxxprim} Assume that $x$ and ${\cal X}$ satisfy the assumptions
of Theorem 2 and $f:\R\ra\R$ is of class $C^{1}$. Then for any $t>0$ both integrals
$\left({\cal X}\right)\int_{0+}^{t}f\left(x_{s-}\right)\dd{x_{s}}$
and $\left({\cal X}\right)'\int_{0+}^{t}f\left(x_{s-}\right)\dd{x_{s}}$
exist, moreover, we have the relation
\[
\left({\cal X}\right)'\int_{0+}^{t}f\left(x_{s-}\right)\dd{x_{s}}=\left({\cal X}\right)\int_{0+}^{t}f\left(x_{s-}\right)\dd{x_{s}}-\int_{0+}^{t}f'\left(x_{s-}\right)\dd{\left\langle x\right\rangle _{s}}.
\]
\end{coro}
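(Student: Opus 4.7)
The plan is to derive the identity by equating the two distinct expressions that Theorem \ref{Foelmer} and Proposition \ref{Foelmer1} each give for $F(x_t) - F(x_0)$ under the very same hypotheses, and then solving for the difference of the two integrals.

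First, note that the existence parts are free: since $x$ and $\mathcal{X}$ are assumed to satisfy the hypotheses of Theorem \ref{Foelmer}, the conclusion of Theorem \ref{Foelmer} directly gives existence of $(\mathcal{X})\int_{0+}^{t}f(x_{s-})\,\dd{x_s}$, while Proposition \ref{Foelmer1} (whose hypotheses are identical to those of Theorem \ref{Foelmer}) gives existence of $(\mathcal{X})'\int_{0+}^{t}f(x_{s-})\,\dd{x_s}$. So there is nothing extra to verify at this stage.

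Next, I would write out the two chain-rule-type formulas side by side. Theorem \ref{Foelmer} yields
\[
F(x_t) - F(x_0) = (\mathcal{X})\!\int_{0+}^{t} f(x_{s-})\,\dd{x_s} - \tfrac{1}{2}\!\int_{0+}^{t} f'(x_{s-})\,\dd{\langle x\rangle_s} + J_t,
\]
where $J_t := \sum_{0<s\le t}\{F(x_s) - F(x_{s-}) - f(x_{s-})\Delta x_s\}$, while Proposition \ref{Foelmer1} yields
\[
F(x_t) - F(x_0) = (\mathcal{X})'\!\int_{0+}^{t} f(x_{s-})\,\dd{x_s} + \tfrac{1}{2}\!\int_{0+}^{t} f'(x_{s-})\,\dd{\langle x\rangle_s} + J_t.
\]
The left-hand sides are identical and the jump-correction terms $J_t$ are identical, so subtracting the second from the first produces
\[
0 = (\mathcal{X})\!\int_{0+}^{t} f(x_{s-})\,\dd{x_s} - (\mathcal{X})'\!\int_{0+}^{t} f(x_{s-})\,\dd{x_s} - \int_{0+}^{t} f'(x_{s-})\,\dd{\langle x\rangle_s},
\]
and rearranging gives the desired relation.

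There is no genuine obstacle here: the entire content of the corollary has been packaged into the preceding two results, so the only care needed is to verify that the jump sums $J_t$ and the hypotheses on $F$, $f$, and the Lebesgue--Stieltjes integral $\int_{0+}^{t} f'(x_{s-})\,\dd{\langle x\rangle_s}$ appear in exactly the same form in both formulas, which they do by construction. Thus the proof is a one-line algebraic subtraction after quoting Theorem \ref{Foelmer} and Proposition \ref{Foelmer1}.
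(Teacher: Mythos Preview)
Your proposal is correct and matches the paper's approach: the paper states the corollary as ``a direct consequence of Theorem \ref{Foelmer} and Proposition \ref{Foelmer1}'' without further proof, and your argument simply spells out that direct consequence by subtracting the two change-of-variable formulas.
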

\begin{rem} \label{Strat} Let $x:[0,+\infty)\ra\R$ be a c\`{a}dl\`{a}g path such that $\sum_{0<s\le t}\left(\Delta x_{s}\right)^{2}<+\infty$ for any $t > 0$. If the family ${\cal X}=\left(x^{\varepsilon}\right)_{{\varepsilon}>0}$ of c\`{a}dl\`{a}g paths $x^{\varepsilon}:[0,+\infty)\ra\R$
satisfies 
\begin{enumerate}[a)]
\item $\left\Vert x-x^{\varepsilon}\right\Vert _{\ns}\le\varepsilon$,
\item $x^{\varepsilon}$ has finite total variation on compacts,
\item there exists $K$ such that for any $t>0$, $\left|\Delta x_{t}^{\varepsilon}\right|\le K\left|\Delta x_{t}\right|$,
\end{enumerate}
 then it follows from (\ref{eq:trzecie}) and (\ref{folm3}) that for any $f:\R\ra\R$ of class $C^{1}$ and $t>0$  the integral
\[
\left({\cal X}\right)''\int_{0+}^{t}f\left(x_{s-}\right)\dd{x_{s}}:=\lim_{\varepsilon\ra0+}\int_{0+}^{t}f\left(x_{s-}^{\varepsilon}\right)\dd{x_{s}^{\varepsilon}}.
\]
is well defined. This integral corresponds to the Stratonovich stochastic integral and, whenever assumptions of Theorem 2 are satisfied, then we have 
\begin{align*}
\left({\cal X}\right)''\int_{0+}^{t}f\left(x_{s-}\right)\dd{x_{s}} & = \left({\cal X}\right)'\int_{0+}^{t}f\left(x_{s-}\right)\dd{x_{s}} + \frac{1}{2}\int_{0+}^{t}f'\left(x_{s-}\right)\dd{\left\langle x\right\rangle _{s}} \\ 
& = \left({\cal X}\right)\int_{0+}^{t}f\left(x_{s-}\right)\dd{x_{s}} - \frac{1}{2}\int_{0+}^{t}f'\left(x_{s-}\right)\dd{\left\langle x\right\rangle _{s}}.
\end{align*}
\end{rem}
Recall that $\mathbb{D}$ denotes the family of  c\`{a}dl\`{a}g functions $x:\left[0,+\ns\right)\ra\R.$
Using Proposition \ref{Foelmer1}
 we easily obtain the following result linking integrals $\rbr{\cal{X}} \int$, $\rbr{\cal{X}}' \int$, $\rbr{\cal{X}}'' \int$ and the It\^o stochastic integral. 
\begin{coro} Let $X_{t}$, $t\ge0$, be a  c\`{a}dl\`{a}g semimartingale on a
filtered probability space $\rbr{\Omega,\mathbb{F},\P}$ and let $\left\langle X\right\rangle $
denote the continuous part of its quadratic variation. Assume that $\left(S^{\varepsilon}\right)_{\varepsilon>0}$ is a sequence of mappings $S^{\varepsilon}:\mathbb{D\ra\mathbb{D}}$ such that for any $x\in\mathbb{D}$ and $\varepsilon>0$, $x^{\varepsilon}:=S^{\varepsilon}\left(x\right)$
satisfies conditions (a)-(c) of Remark \ref{Strat}  and is such that for almost
all $\omega\in\Omega$ the sequence ${\cal X}=\left(S^{\varepsilon}\left(x\right)\right)_{\varepsilon>0}$,
where $x=X\left(\omega\right)$, satisfies 
\begin{equation}
2\lim_{\varepsilon\ra0+}\int_{0}^{t}\left(x_{s}-x_{s}^{\varepsilon}\right)\dd{x_{s}^{\varepsilon}}=\left\langle X\right\rangle_t \left(\omega\right)\text{ for any }t>0.\label{eq:q_var_assumption}
\end{equation}
If $f:\R\ra\R$ is of class $C^{1}$ and $t>0$ then for almost all
$\omega\in\Omega$ the integrals $\left({\cal X}\right)\int_{0+}^{t}f\left(x_{s-}\right)\dd{x_{s}}$, $\left({\cal X}\right)'\int_{0+}^{t}f\left(x_{s-}\right)\dd{x_{s}}$
and $\left({\cal X}\right)''\int_{0+}^{t}f\left(x_{s-}\right)\dd{x_{s}}$
exist and satisfy
\[
\left({\cal X}\right)\int_{0+}^{t}f\left(x_{s-}\right)\dd{x_{s}}=\left(\int_{0+}^{t}f\left(X_{s-}\right)\dd{X_{s}}\right)\left(\omega\right)+\rbr{\int_{0+}^{t}f'\left(X_{s-}\right) \dd \left\langle X\right\rangle_s }\left(\omega\right),
\]
\begin{equation}
\left({\cal X}\right)'\int_{0+}^{t}f\left(x_{s-}\right)\dd{x_{s}}=\left(\int_{0+}^{t}f\left(X_{s-}\right)\dd{X_{s}}\right)\left(\omega\right)\label{eq:Ito1}
\end{equation}
and
\[
\left({\cal X}\right)''\int_{0+}^{t}f\left(x_{s-}\right)\dd{x_{s}}=\left(\int_{0+}^{t}f\left(X_{s-}\right)\dd{X_{s}}\right)\left(\omega\right)+\frac{1}{2}\rbr{\int_{0+}^{t}f'\left(X_{s-}\right) \dd \left\langle X\right\rangle_s }\left(\omega\right),
\]
where $\int_{0+}^{t}f\left(X_{s-}\right)\dd{X_{s}}$ denotes the usual
It\^o stochastic integral.
\end{coro}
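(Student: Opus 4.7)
The plan is to work pointwise, fixing $\omega$ in the full $\P$-measure set on which (\ref{eq:q_var_assumption}) holds and on which $\sum_{0<s\le t}(\Delta X_{s})^{2}<+\infty$ (automatic since $X$ is a semimartingale). With $x:=X(\omega)$ and $\mathcal{X}:=(x^{\varepsilon})_{\varepsilon>0}$, conditions (a)--(c) of Remark \ref{Strat} together with (\ref{eq:q_var_assumption}) should furnish all the hypotheses needed to apply Theorem \ref{Foelmer} and Proposition \ref{Foelmer1}; in particular the three integrals $(\mathcal{X})\int$, $(\mathcal{X})'\int$ and $(\mathcal{X})''\int$ all exist, and, crucially, $\langle x\rangle_{t}=\langle X\rangle_{t}(\omega)$ for every $t>0$ by comparing definition (\ref{eq:q_var}) with (\ref{eq:q_var_assumption}).

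The heart of the argument will be to juxtapose the classical It\^o formula for the semimartingale $X$,
\begin{align*}
F(X_{t})-F(X_{0}) & =\int_{0+}^{t}f(X_{s-})\,\dd{X_{s}}+\frac{1}{2}\int_{0+}^{t}f'(X_{s-})\,\dd{\langle X\rangle_{s}}\\
 & \quad+\sum_{0<s\le t}\bigl\{F(X_{s})-F(X_{s-})-f(X_{s-})\Delta X_{s}\bigr\},
\end{align*}
with the pathwise substitution formula furnished by Proposition \ref{Foelmer1} applied to $x=X(\omega)$. The left-hand sides coincide by definition; the jump-correction sums coincide because $\Delta x_{s}=\Delta X_{s}(\omega)$; and the Lebesgue--Stieltjes integrals against $\dd{\langle x\rangle}$ match those against $\dd{\langle X\rangle(\omega)}$ thanks to the identification established above. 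What is left over is precisely (\ref{eq:Ito1}).

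The remaining two formulas then follow by rearrangement. Corollary \ref{coroxxprim} expresses $(\mathcal{X})\int_{0+}^{t}f(x_{s-})\,\dd{x_{s}}$ as $(\mathcal{X})'\int_{0+}^{t}f(x_{s-})\,\dd{x_{s}}+\int_{0+}^{t}f'(x_{s-})\,\dd{\langle x\rangle_{s}}$; substituting (\ref{eq:Ito1}) and using $\langle x\rangle=\langle X\rangle(\omega)$ yields the first displayed identity of the corollary. Similarly, Remark \ref{Strat} provides $(\mathcal{X})''\int_{0+}^{t}f(x_{s-})\,\dd{x_{s}}=(\mathcal{X})'\int_{0+}^{t}f(x_{s-})\,\dd{x_{s}}+\frac{1}{2}\int_{0+}^{t}f'(x_{s-})\,\dd{\langle x\rangle_{s}}$, giving the third. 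I expect the only nontrivial obstacle to be checking that the boundedness hypothesis (\ref{eq:boundedness}) of Theorem \ref{Foelmer} is accessible from the standing conditions of the corollary; should it not be automatic from (a)--(c) plus (\ref{eq:q_var_assumption}), it must be added tacitly before the comparison with It\^o is carried out. Everything beyond that identification is algebraic bookkeeping built on results already in hand.
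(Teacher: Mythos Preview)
Your approach is essentially identical to the paper's: apply the classical It\^o formula to $X$, compare it with the pathwise formula from Proposition \ref{Foelmer1} at a fixed $\omega$ where (\ref{eq:q_var_assumption}) holds to obtain (\ref{eq:Ito1}), and then invoke Corollary \ref{coroxxprim} and Remark \ref{Strat} for the remaining two identities. Your observation about the boundedness hypothesis (\ref{eq:boundedness}) is apt; the paper does not address it explicitly either and simply invokes Proposition \ref{Foelmer1}, so you are matching the paper's level of rigor here.
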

\begin{proof} Let $F$ be an antiderivative of $f$. Using the It\^o formula
we have a.s. 
\begin{align}
\int_{0+}^{t}f\left(X_{s-}\right)\dd{X_{s}} & =F\left(X_{t}\right)-F\left(X_{0}\right)-\frac{1}{2}\int_{0+}^{t}f'\left(X_{s-}\right)\dd{\left\langle X\right\rangle _{s}}\nonumber \\
 & \quad-\sum_{0<s\le t}\left\{ F\left(X_{s}\right)-F\left(X_{s-}\right)-f\left(X_{s-}\right)\Delta X_{s}\right\} .\label{eq:Ito}
\end{align}
Let $\omega\in\Omega$ be such that for $x=X\left(\omega\right)$
the condition (\ref{eq:q_var_assumption}) is true. By (\ref{eq:q_var_assumption})
and by Proposition \ref{Foelmer1} we have 
\begin{align*}
\left({\cal X}\right)'\int_{0+}^{t}f\left(x_{s-}\right)\dd{x_{s}} & =F\left(x_{t}\right)-F\left(x_{0}\right)-\frac{1}{2}\int_{0+}^{t}f'\left(x_{s-}\right)\dd{\left\langle x\right\rangle _{s}}\\
 & \quad-\sum_{0<s\le t}\left\{ F\left(x_{s}\right)-F\left(x_{s-}\right)-f\left(x_{s-}\right)\Delta x_{s}\right\} 
\end{align*}
where $\left\langle x\right\rangle =\left\langle X\right\rangle \left(\omega\right)$.
Taking the intersection of the subsets of $\Omega$ where (\ref{eq:Ito})
and (\ref{eq:q_var_assumption}) resp. hold, we get a subset of $\Omega$
of probability $1$ where equality (\ref{eq:Ito1}) holds. The relations for $\rbr{\cal{X}}\int$ and $\rbr{\cal{X}}''\int$ follow now easily from (\ref{eq:q_var_assumption}), Corollary \ref{coroxxprim} and Remark \ref{Strat}.
\end{proof}

\section*{Acknowledgements}
The author would like to thank the anonymous referees for their valuable comments which led to
improvement of this work.

\section*{Competing interests}
The author declares to have no competing interests.

\section*{Funding}
This research was partially founded by the National Science Centre, Poland, under Grant No.~$2016/21/$B/ST$1/01489.$

\bibliographystyle{tfs}
\bibliography{/Users/rafallochowski/biblio/biblio}

\end{document}